\documentclass[a4paper,12pt]{article}
\usepackage[english]{babel}
\usepackage{amssymb,amsmath,amsfonts,amsthm}

\newtheorem{theorem}{Theorem}
\newtheorem{lemma}{Lemma}

\newtheorem{proposition}{Proposition}
\theoremstyle{remark}
\newtheorem{remark}{Remark}

\def\indexes{\mathfrak{A}}
\def\Line{\mathfrak{L}}
\def\Plane{\mathfrak{P}}
\def\PG{\mathrm{PG}}
\def\dfeq{\stackrel{\mathrm{df}}{=}}
\def\supp{\mathrm{supp}}

\begin{document}
\title{Embedding in $q$-ary $1$-perfect codes and partitions%
\thanks{The work was financed by the Russian Science Foundation (grant No 14-11-00555)}%
}
\author{D. S. Krotov%
\thanks{Sobolev Institute of Mathematics, Novosibirsk, Russia; Novosibirsk State University, Novosibirsk, Russia. 
E-mail: krotov@math.nsc.ru}
, E. V. Sotnikova%
\thanks{Sobolev Institute of Mathematics, Novosibirsk, Russia. E-mail: lucernavesper@gmail.com}%
}
\maketitle
\begin{abstract}
  We prove that every 
  $1$-error-correcting code 
  over a finite field can be 
  embedded in a $1$-perfect code 
  of some larger length.
  Embedding in this context means 
  that the original code is a subcode 
  of the resulting $1$-perfect code 
  and can be obtained from it by repeated 
  shortening.
  Further, we generalize the results to partitions: 
  every partition of the Hamming space into
  $1$-error-correcting codes can be 
  embedded in a partition of a space 
  of some larger dimension 
  into $1$-perfect codes.
  For the partitions, the embedding length 
  is close to the theoretical bound for the general case
  and optimal for the binary case.
\end{abstract}

\section{Introduction}
The goal of the current work is to show that any (in general, nonlinear) code
that can correct at least one error is a subcode of a $1$-perfect code of some larger length.
Moreover, we prove a similar result for the partitions into $1$-error-correcting codes.
In \cite{AvgKro:embed}, the possibility to embed into $1$-perfect code was proven for binary codes.
In \cite{Rom:2012}, the ternary case was solved;
for the $q$-ary case with $q>3$, there are similar embedding results in \cite{Rom:2012},
but with the restriction that the embedded code is required to be at least $2$-error-correcting
(this restriction is very strict as almost all $1$-error-correcting codes are not $2$-error-correcting).
The reason of such restriction is that the method suggested in \cite{AvgKro:embed}
does not work in general case: the components that should be switched in the linear $1$-perfect code
to build the required subcode can intersect in the case $q>3$ (see Remark~\ref{rem:intersect}).
To avoid this problem, we suggest a modification of the method.

We will follow the convenient notation 
and line of reasoning from \cite{AvgKro:embed}
with three main differences.
At first, the key definition of a linear $i$-component
(in our notation, we will write a Greek letter instead of traditional $i$)
is now given in a usual form \cite{PheVil:2002:q}, 
while the required property 
is declared in Lemma~\ref{l:comp}
(the definition based on this property 
would look complicate in the $q$-ary case).
At second, the formulation of the crucial proposition,
which is essentially the main and largest part
of the proof of the main theorem, 
is different from the crucial lemma in the binary case
(as was noted above, the last one does not work in the general case, see also Remark~\ref{rem:intersect}).
At third, we add the theorem about embedding partitions,
which is new for all $q$, including $q=2$.

\section{Notation and definitions}
Over the article, we will use the following notation.
\begin{itemize}
\item $F$  denotes the Galois field $\mathrm{GF}(q)$ of order $q$.
\item $F^m$ is the set of $m$-tuples over $F$, considered as a vector space over $F$. 
The elements of $F^m$ are denoted by Greek letters.
\item $\indexes$ consists of $m$-tuples from $F^m$ with the first nonzero element equal to $1$.
\item The intersection of $\indexes$ with a $2$-dimensional subspace of $F^m$ will be referred to as a \emph{line}.
The cardinality of every line is $q+1$. The set of lines together with the set of the \emph{points} $\indexes$
form an incidence structure, known as the \emph{projective geometry} $\PG(m-1,q)$.
\item The intersection of $\indexes$ with a $3$-dimensional subspace $F^m$ will be referred to as a \emph{plane}.
\item $n\dfeq\frac{q^{m}-1}{q-1}$.
\item $\Pi=\{\pi^{(1)},\,\ldots,\,\pi^{(m)}\}=\{(1\,0\,\ldots\,0),\,\ldots,\,(0\,\ldots\,0\,1)\}$ 
      is the natural basis in $F^m$.
\item The elements of $F^n$ will be denoted by overlined letters with the coordinates indexed by the elements of $\indexes$. We assume that the first $m$ coordinates have the indexes $\pi^{(1)},\,\ldots,\,\pi^{(m)}$, 
while the other $n-m$ coordinates are ordered in some arbitrary fixed way.
\item $\{\bar e^{(\delta)}\}_{\delta \in \mathfrak{A}}$ is the natural basis in $F^n$, 
      herewith $\bar e^{(\pi^{(\delta)})}=(\pi^{(\delta)},0^{n-m})$.
\item For any $\alpha = (\alpha_1,\,\ldots,\,\alpha_m) \in F^m$, 
we define $\bar \alpha \dfeq(\alpha,\, 0^{n-m}) \in F^n$; 
moreover $\bar \alpha = \sum_{i=1}^m \alpha_i\bar e^{(\pi^{(i)})}$.
\item The \emph{Hamming distance} $d(x,\,y)$ is the number of positions in which vectors $x$, $y$ from the same space differ. 
\item The \emph{neighborhood} $\Omega(M)$ of a set $M\subset F^n$ is the set of the vectors at distance at most $1$ from $M$.
\item A set $C \subset F^m$ is called a \emph{$1$-code} if the neighborhoods of the codewords are disjoint.
\item A $1$-code $P \subset F^n$ is called a \emph{$1$-perfect code} if $\Omega(P)=F^n$.
\item The \emph{Hamming code} $\mathcal H_m$ of length $n$ is defined as the set of vectors $\bar c \in F^n$ satisfying the following equation:
\begin{equation}\label{eq:hammingcode}
\sum_{\alpha\in\indexes}c_\alpha\alpha = 0^m.
\end{equation}
\item $\supp(c)=\{\delta\in \mathfrak{A} \mid c_\delta \ne 0\}$.
\item $T \dfeq \{c\in \mathcal H_m \,\bigl|\,\, |\supp(c)|=3\}$. 
      The elements of $T$ are called triples.
\item $T_{\delta} \dfeq\{c \in T \mid c_{\delta}=1\}$.
\item The \emph{linear $\delta$-component} $R_{\delta}$ is defined as the linear span $<T_{\delta}>$.
By an \emph{$\delta$-component of the Hamming code}, we will mean any coset of the linear $\delta$-component
that is a subset of the Hamming code.
\end{itemize}
\section{Preliminaries}\label{s:pre}
\begin{lemma}\label{l:compneighborhood}
For any $\bar z \in F^n$ it holds that $\Omega(R_\delta+\bar z)=\Omega(R_\delta+\bar z+ \mu \bar e^{(\delta)})$ for all $\mu \in F$.
\end{lemma}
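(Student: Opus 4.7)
My plan is to establish both inclusions. It suffices to show $\Omega(R_\delta+\bar z)\subseteq \Omega(R_\delta+\bar z+\mu\bar e^{(\delta)})$, because applying the same statement with $\bar z$ replaced by $\bar z+\mu\bar e^{(\delta)}$ and $\mu$ replaced by $-\mu$ immediately yields the reverse inclusion. The case $\mu=0$ is vacuous, so I would assume $\mu\neq 0$, pick an arbitrary $v\in\Omega(R_\delta+\bar z)$, write $v=r+\bar z+\lambda\bar e^{(\eta)}$ with $r\in R_\delta$, $\lambda\in F$, $\eta\in\indexes$, and look for $r'\in R_\delta$ making $v-(r'+\bar z+\mu\bar e^{(\delta)})$ of Hamming weight at most $1$.

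When $\lambda=0$ or $\eta=\delta$, the choice $r'=r$ works trivially, since $\lambda\bar e^{(\eta)}-\mu\bar e^{(\delta)}$ is already supported on a single coordinate. The substantial case is $\lambda\neq 0$ and $\eta\neq\delta$. Here I would try to build a triple $c\in T_\delta$ with support $\{\delta,\eta,\zeta\}$ and coefficient $-\lambda/\mu$ at $\eta$. The triple relation $\sum_\alpha c_\alpha\alpha=0$ then forces $c_\zeta\zeta=-\delta+(\lambda/\mu)\eta$. Because $\delta$ and $\eta$ are distinct projective points, they are linearly independent in $F^m$, so the right-hand side is a nonzero vector; normalizing its first nonzero entry to $1$ determines a unique $\zeta\in\indexes$ and scalar $c_\zeta\neq 0$, and a quick check rules out $\zeta\in\{\delta,\eta\}$. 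Thus $c\in T_\delta\subset R_\delta$, and taking $r'\dfeq r-\mu c$ a direct expansion yields
\[
v-(r'+\bar z+\mu\bar e^{(\delta)})=\mu c+\lambda\bar e^{(\eta)}-\mu\bar e^{(\delta)}=\mu c_\zeta\bar e^{(\zeta)},
\]
which has weight $1$, completing the inclusion.

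The only real obstacle I anticipate is the construction of the triple $c$: the whole argument reduces to the observation that in $\PG(m-1,q)$ the unique line through the two distinct points $\delta,\eta$ carries a third point $\zeta$ whose coefficient is pinned down by the Hamming equation once the coefficient at $\eta$ is prescribed, and one must verify $\zeta\notin\{\delta,\eta\}$. Once $c$ is in hand the conclusion is a one-line substitution; conceptually, the lemma says that any unit shift at $\eta\neq\delta$ combined with a suitable element of $R_\delta$ converts a $\mu\bar e^{(\delta)}$-offset into a single-coordinate error at the collinear third point $\zeta$.
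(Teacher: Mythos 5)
Your proof is correct, but it takes a genuinely different route from the paper. The paper reduces to $\bar z=0^n$ and then simply cites Phelps--Villanueva for the fact that $(\mathcal H_m\setminus R_\delta)\cup(R_\delta+\mu\bar e^{(\delta)})$ is again a $1$-perfect code; since the neighborhoods of codewords of a $1$-perfect code partition $F^n$, the neighborhoods of $R_\delta$ and of $R_\delta+\mu\bar e^{(\delta)}$ must both equal the complement of $\Omega(\mathcal H_m\setminus R_\delta)$, and the lemma follows in two lines. You instead give a direct, self-contained verification: the symmetry reduction to one inclusion is clean, the degenerate cases $\lambda=0$ and $\eta=\delta$ are handled trivially, and in the main case your triple $c$ with $c_\delta=1$, $c_\eta=-\lambda/\mu$ and third point $\zeta$ on the line through $\delta$ and $\eta$ is well defined (distinct points of $\indexes$ are linearly independent, and your check that $\zeta\notin\{\delta,\eta\}$ goes through), so $r'=r-\mu c\in R_\delta$ and the displayed cancellation leaves a single-coordinate error at $\zeta$. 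In effect you have re-proved the relevant half of the switching result that the paper outsources to the reference; your argument is longer but elementary and makes transparent the geometric mechanism (the third point of the projective line through $\delta$ and $\eta$) that underlies why $\delta$-components can be translated along $\bar e^{(\delta)}$ without changing their neighborhood.
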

\begin{proof}
Without loss of generality it is enough to prove the statement for $\bar z=0^n$. It is shown in \cite{PheVil:2002:q} that $(\mathcal H_m \setminus R_{\delta})\cup(R_{\delta}+\mu\bar e^{(\delta)})$ is a 1-perfect code for all $\mu \in F$. From the definition of the 1-perfect code it follows that the neighborhoods of the sets $R_{\delta}$ and $R_{\delta}+\mu\bar e^{(\delta)}$ are equal. So the statement of Lemma is true.
\end{proof}

\begin{lemma}\label{l:comp}
Let $\delta \in \indexes$. 
Every word $\bar c$ from $R_\delta$ satisfies the relation
\begin{equation}\label{eq:for-comp}
  \sum_{\alpha\in \Line} c_\alpha l(\alpha) = 0^m
\end{equation}
for all linear functions $l$ from $F^m$ to $F$ such that $l(\delta)=0$ and all lines $\Line$ containing $\delta$.
\end{lemma}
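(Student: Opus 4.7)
The plan is to reduce the claim to the linear generators of $R_\delta$, and then to split into cases according to whether the given line coincides with the ``support line'' of the generator.

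First, I would observe that both sides of \eqref{eq:for-comp} depend linearly on $\bar c$, so it suffices to verify the identity when $\bar c$ runs over the generating set $T_\delta$. Fix such a triple $\bar c$ with $c_\delta = 1$ and $\supp(\bar c) = \{\delta,\alpha,\beta\}$ for some distinct $\alpha,\beta \in \indexes$. Applying the Hamming code relation \eqref{eq:hammingcode} to $\bar c$ gives $\delta + c_\alpha \alpha + c_\beta \beta = 0^m$, which forces $\delta,\alpha,\beta$ to be linearly dependent in $F^m$; hence they lie on a common line $\Line'$ of $\PG(m-1,q)$.

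Next, given any line $\Line$ through $\delta$ and any linear $l\colon F^m \to F$ with $l(\delta)=0$, I would consider two cases. If $\Line \neq \Line'$, then two distinct lines of $\PG(m-1,q)$ meet in at most one point, so $\Line \cap \Line' = \{\delta\}$ and therefore $\Line \cap \supp(\bar c) = \{\delta\}$; the sum $\sum_{\gamma \in \Line} c_\gamma l(\gamma)$ collapses to $c_\delta l(\delta) = 0$. If $\Line = \Line'$, then the $q-2$ points of $\Line$ outside $\supp(\bar c)$ contribute nothing, so the sum reduces to $c_\delta l(\delta) + c_\alpha l(\alpha) + c_\beta l(\beta)$; using $l(\delta)=0$ and applying $l$ to $\delta + c_\alpha \alpha + c_\beta \beta = 0^m$ yields exactly the vanishing of the remaining two terms.

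I do not foresee a genuine obstacle here: the argument rests only on linearity, the structural description of triples in the Hamming code, and the elementary incidence property that two distinct projective lines meet in at most one point. The only bookkeeping to watch carefully is the distinction between the $3$-element support of $\bar c$ and the full $(q+1)$-element line $\Line'$, but this is immaterial because $\bar c$ vanishes at the other $q-2$ points of $\Line'$.
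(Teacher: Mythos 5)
Your proof is correct and follows essentially the same route as the paper's: reduce to the generating triples $T_\delta$ by linearity, note that the support of each triple lies on a single line, and split into the two cases according to whether that line is $\Line$ or meets it only in $\delta$. The only difference is cosmetic (you perform the linearity reduction first and spell out the incidence argument slightly more explicitly).
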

\begin{proof}
Since $R_\delta$ is a subset of the Hamming code, each of its elements $\bar c$ satisfies (\ref{eq:hammingcode}).
Then 
\begin{equation}\label{eq:partial}
  \sum_{\alpha\in \indexes} c_\alpha l(\alpha) = 0^m
\end{equation}
holds for any linear function $l$. Now assume $l(\delta)=0$
and consider a line $\Line$ containing $\delta$.
Then, the support of every triple from $T_{ \delta}$ either is included in $\Line$ or intersect with $\Line$ in only one element $\delta$. In the last case, (\ref{eq:for-comp}) is trivial; in the former case, 
it trivially follows from (\ref{eq:partial}). 
Since the required relation holds for every element of $T_{ \delta}$, we see from linearity 
that it holds for the linear span of $T_{\delta}$, i.e., for $R_{\delta}$.
\end{proof}

\begin{lemma}\label{l:comp2}
Let $\delta$, $\kappa \in \indexes$. 
Every element $\bar c$ of $< R_\delta, R_\kappa >$  satisfies the relation
\begin{equation}\label{eq:for-comp2}
  \sum_{\alpha\in \Plane} c_\alpha l(\alpha) = 0^m
\end{equation}
for all linear functions $l$ from $F^m$ to $F$ such that $l(\delta)=l(\kappa)=0$ 
and all planes $\Plane$ containing $\delta$ and $\kappa$.
\end{lemma}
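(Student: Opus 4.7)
The plan is to reduce the plane-summation to the line-summation of Lemma~\ref{l:comp} by decomposing the plane $\Plane$ into a pencil of lines through one distinguished point.

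First, by linearity of the relation~(\ref{eq:for-comp2}) in $\bar c$, it suffices to verify it separately for $\bar c \in R_\delta$ and for $\bar c \in R_\kappa$; by the symmetry of the hypotheses (both $\delta,\kappa \in \Plane$ and $l(\delta)=l(\kappa)=0$), the two cases are treated identically, so I will focus on $\bar c \in R_\delta$.

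Next I use the standard projective-geometry fact that the $q^2+q+1$ points of the plane $\Plane$ are covered by the pencil of $q+1$ lines through $\delta$, say $\Line_1,\ldots,\Line_{q+1}$, which pairwise intersect only at $\delta$. Each of these lines contains $\delta$, so Lemma~\ref{l:comp} applies and gives
\begin{equation*}
  \sum_{\alpha\in\Line_i} c_\alpha l(\alpha) = 0^m \qquad (i=1,\ldots,q+1).
\end{equation*}
Summing over $i$, the point $\delta$ is counted $q+1$ times and every other point of $\Plane$ exactly once, so
\begin{equation*}
  0^m = \sum_{i=1}^{q+1}\sum_{\alpha\in\Line_i} c_\alpha l(\alpha)
      = (q+1)\,c_\delta l(\delta) + \sum_{\alpha\in\Plane,\,\alpha\ne\delta} c_\alpha l(\alpha).
\end{equation*}
Since $l(\delta)=0$, the first term vanishes and the second is exactly $\sum_{\alpha\in\Plane} c_\alpha l(\alpha)$, giving~(\ref{eq:for-comp2}). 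The case $\bar c\in R_\kappa$ is obtained by repeating the argument with the pencil through $\kappa$ (using $l(\kappa)=0$), and the general case follows by linearity.

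There is no real obstacle here; the only thing to check is the incidence count (that the pencil of lines through $\delta$ inside $\Plane$ partitions $\Plane\setminus\{\delta\}$ and covers $\delta$ exactly $q+1$ times), which is a standard property of $\PG(2,q)$. The hypothesis $l(\kappa)=0$ is not used for the $R_\delta$-summand, but it is essential for the symmetric argument on the $R_\kappa$-summand.
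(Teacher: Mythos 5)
Your proof is correct and follows essentially the same route as the paper: reduce by linearity to $\bar c\in R_\delta$ (resp.\ $R_\kappa$) and sum the line relation of Lemma~\ref{l:comp} over the pencil of lines through $\delta$ inside $\Plane$, the overcounted point $\delta$ contributing nothing since $l(\delta)=0$. You merely spell out the incidence count that the paper leaves implicit.
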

\begin{proof}
First consider the case $\bar c \in R_\delta$. 
Summarizing (\ref{eq:for-comp}) over the all lines containing $\delta$ and included in $\Plane$,
we get (\ref{eq:for-comp2}).
So, the elements of $R_\delta$ and, similarly, the elements of $R_\kappa$ satisfy (\ref{eq:for-comp2}).
By the linearity, the elements of $< R_\delta, R_\kappa >$ do.
\end{proof}
\section{Embedding in a $1$-perfect code}\label{s:main}
\begin{proposition}\label{p:disjoint}
Assume that $\delta$ and $\kappa$ from $F^m$ 
both start with $1$ and the distance between them is at least
$3$. 
Then the $\delta$-component 
$R_{\delta}+\bar \delta-\bar e^{\delta}$ 
and the $\kappa$-component 
$R_{\kappa}+\bar \kappa-\bar e^{\kappa}$ are disjoint.
\end{proposition}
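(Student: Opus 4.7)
The plan is to derive a contradiction via Lemma~\ref{l:comp2}. If the two cosets share a word, subtracting the two representations yields
$$w := \bar\kappa - \bar\delta + \bar e^{(\delta)} - \bar e^{(\kappa)} \in \langle R_\delta, R_\kappa\rangle.$$
A useful dual form of Lemma~\ref{l:comp2} says that, for every $\bar c \in \langle R_\delta, R_\kappa\rangle$ and every plane $\Plane$ through $\delta,\kappa$, the vector $\sum_{\alpha \in \Plane} c_\alpha \alpha \in F^m$ lies in $\langle \delta, \kappa\rangle$: indeed, $\sum_\alpha c_\alpha l(\alpha) = l\bigl(\sum_\alpha c_\alpha \alpha\bigr)$ vanishes for every linear $l$ that kills both $\delta$ and $\kappa$, which, by duality, is equivalent to the stated membership.

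Applied to $\bar c = w$, and writing $\gamma := \kappa - \delta$ and $J := \supp(\gamma) = \{i : \delta_i \neq \kappa_i\}$ (so $|J| \geq 3$), the sum decomposes into the $\bar\kappa - \bar\delta$ piece, $\sum_{\pi^{(i)} \in \Plane} \gamma_i \pi^{(i)}$, and the $\bar e^{(\delta)} - \bar e^{(\kappa)}$ piece, $\delta - \kappa = -\gamma$, which combine to $-\sum_{i:\,\pi^{(i)} \notin \Plane} \gamma_i \pi^{(i)}$. So the constraint to exploit becomes
$$\sum_{i \in J,\; \pi^{(i)} \notin \Plane} \gamma_i \pi^{(i)} \in \langle \delta, \kappa\rangle$$
for every plane $\Plane \supseteq \{\delta, \kappa\}$.

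The plan is then to specialize to $\Plane := \langle \delta, \kappa, \pi^{(i_0)}\rangle$ for any chosen $i_0 \in J$. The main obstacle, requiring the most care, is to show that no other $\pi^{(i)}$ with $i \in J$ lies in $\Plane$: if $\pi^{(i)} = a\delta + b\kappa + c\pi^{(i_0)}$ for $i \in J \setminus \{i_0\}$, then reading coordinate $j$ for each $j \neq i, i_0$ gives $a\delta_j + b\kappa_j = 0$. The pair $(\delta_1, \kappa_1) = (1, 1)$ (since $\delta$ and $\kappa$ both start with $1$, hence $1 \notin J$) together with $(\delta_{j_0}, \kappa_{j_0})$ for any $j_0 \in J \setminus \{i, i_0\}$ (nonempty because $|J| \geq 3$, and not proportional to $(1,1)$ since $\delta_{j_0} \neq \kappa_{j_0}$) span $F^2$, forcing $(a, b) = 0$, which contradicts the coordinate-$i$ equation $a\delta_i + b\kappa_i = 1$. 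Once this is settled, the displayed constraint reduces to $\gamma - \gamma_{i_0}\pi^{(i_0)} \in \langle\delta, \kappa\rangle$, and because $\gamma \in \langle\delta, \kappa\rangle$ and $\gamma_{i_0} \neq 0$, this yields $\pi^{(i_0)} \in \langle\delta, \kappa\rangle$. Writing $\pi^{(i_0)} = a\delta + b\kappa$ and reading coordinate $1$ gives $a + b = 0$, so $\pi^{(i_0)} = b\gamma$; this is impossible because $|\supp\gamma| \geq 3$ while $|\supp\pi^{(i_0)}| = 1$, and the proof is done.
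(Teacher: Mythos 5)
Your proof is correct and takes essentially the same route as the paper's: it reduces disjointness to showing that the difference $w$ of the two coset representatives violates Lemma~\ref{l:comp2} on the plane spanned by $\delta$, $\kappa$ and a single $\pi^{(i_0)}$ with $\delta_{i_0}\ne\kappa_{i_0}$, with your coordinate-$1$/coordinate-$j_0$ analysis playing exactly the role of the paper's case split on $a+b$ (your dual phrasing of the lemma, $\sum_{\alpha\in\Plane}c_\alpha\alpha\in\langle\delta,\kappa\rangle$, is equivalent to evaluating a functional $l$ with $l(\delta)=l(\kappa)=0\ne l(\pi^{(i_0)})$). The one presentational point is that the fact certifying $\langle\delta,\kappa,\pi^{(i_0)}\rangle$ is genuinely $3$-dimensional, namely $\pi^{(i_0)}\notin\langle\delta,\kappa\rangle$, is exactly what your final sentence proves unconditionally, so it should be recorded before the plane is introduced rather than only at the end.
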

\begin{proof}
Consider the vector difference
$\bar c=(\bar \delta - \bar e^{(\delta)})
- (\bar \kappa - \bar e^{(\kappa)})$. 
It is sufficient to show that $\bar c \not\in <R_\delta, R_\kappa>$.
We will show that $\bar c$ does not satisfy (\ref{eq:for-comp2}).
Note that the first element of $\bar c$ is $0$,
and $c_{\pi^{(i)}} \ne 0$ if and only if $\delta_i \ne \kappa_i$.
Among the other coordinates (not from $\Pi$), $\bar c$
has exactly two nonzero positions, ${\delta}$ and ${\kappa}$.
Now consider some $i$ such that $c_{\pi^{(i)}} \ne 0$.
Note that $\pi^{(i)}$, $\delta$ and $\kappa$ are linearly independent 
(indeed, a nontrivial linear combination of $\delta$ and $\kappa$ 
is either nonzero in the first position
or a multiple of $\delta-\kappa$, 
which has at least three nonzeros 
and thus cannot coincide with $\pi^{(i)}$);
hence there is a unique plane $P$
containing $\pi^{(i)}$, $\delta$ and ${\kappa}$.

Now we state that $\pi^{(i)}$, $\delta$ and ${\kappa}$ are the only points of 
$P$ in which $\bar c$ is not equal to zero. 
Indeed, assume
that $\beta = h \pi^{(i)}+ a  \delta + b {\kappa} \in \indexes$.
If $a+b \ne 0$ then $\beta_1 \ne 0$ and thus either $\beta \in \{\delta,\kappa\}$ or 
$c_\beta = 0$ holds.
If $a+b = 0$ then 
$a  \delta + b {\kappa}= a  (\delta - {\kappa})$ and thus,
by the hypothesis of the proposition, this combination has at list three nonzero positions. 
In this case, 
$\beta$ has at least two nonzero positions, and thus does not belong to $\Pi$.
Hence, $c_\beta = 0$.

Then we consider a linear function $l$ such that $l(\delta)=l(\kappa)=0\ne l(\pi^{(i)})$ and see
that (\ref{eq:for-comp2}) cannot hold as it has only one nonzero summand, $\alpha = \pi^{(i)}$.
\end{proof}

\begin{remark}\label{rem:intersect} 
The hypothesis that both $\delta$ and $\kappa$ start with $1$ 
is necessary in Proposition~\ref{p:disjoint}
for $q>3$. 
For example let us consider $\delta = (1,1,1)$ and 
$\kappa = (t, t^2, t^2) = t (1,t,t) = t \gamma$,
where $t^2$ is different from $1$ and $t$ (so, $q\ge 4$).
Then the vectors $\overline \delta$ and $\overline \kappa$ are at distance $1$ 
from the $\delta$-component $R_{\delta}+\overline \delta-\overline e^{\delta}$
and the $\gamma$-component $R_{\gamma}+\overline \kappa-t\overline e^{\gamma}$ of the Hamming code.
It is easy to see that the nonzero coordinates 
$\pi^{(1)}$, $\pi^{(2)}$, $\pi^{(3)}$, $\delta$ and $\gamma$
of the difference 
$\overline c=(\overline \delta - \overline e^{(\delta)})
- (\overline \kappa - \overline t e^{(\gamma)})$ belong to the same plain.
Hence, since this difference is from the Hamming code, 
we see that it satisfies (\ref{eq:for-comp2}).
It is not difficult to conclude that the corresponding components intersect.
\end{remark}

\begin{theorem}\label{th:code}
Let $C \subset F^{m-1}$ be a 1-code. Define $\dot C\dfeq\{(1,x)\mid x\in C\}$. 
Then the following set
$$P(C)\dfeq\left.\Biggl(\mathcal H_m \right\backslash  \bigcup_{\delta\in \dot C} (R_{\delta}+\bar\delta-\bar e^{(\delta)}) \Biggr)\cup\Biggl(\bigcup_{\delta\in \dot C}(R_{\delta}+\bar\delta)\Biggr)$$
is a 1-perfect code in $F^n$, within 
\begin{equation}\label{eq:origincode}
C=\{x \in F^{m-1} \mid (1,\, x,\, 0^{n-m}) \in P(C)\}.
\end{equation}
\end{theorem}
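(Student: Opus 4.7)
The plan is to view $P(C)$ as the result of a family of simultaneous ``switchings'' on the Hamming code: for each $\delta\in\dot C$ we replace the $\delta$-component $R_\delta+\bar\delta-\bar e^{(\delta)}$ of $\mathcal H_m$ by its translate $R_\delta+\bar\delta$. Four facts need to be established: (i) each $R_\delta+\bar\delta-\bar e^{(\delta)}$ is genuinely a coset of $R_\delta$ contained in $\mathcal H_m$; (ii) $\Omega(P(C))=F^n$; (iii) $P(C)$ is a $1$-code; and (iv) $C$ is recovered by the shortening formula (\ref{eq:origincode}).

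Items (i), (ii), and (iv) should be short. For (i), applying (\ref{eq:hammingcode}) to $\bar\delta-\bar e^{(\delta)}$ gives the Hamming sum $\sum_i\delta_i\pi^{(i)}-\delta=0^m$, so $\bar\delta-\bar e^{(\delta)}\in\mathcal H_m$. For (ii), Lemma~\ref{l:compneighborhood} gives $\Omega(R_\delta+\bar\delta)=\Omega(R_\delta+\bar\delta-\bar e^{(\delta)})$ for every $\delta\in\dot C$, so the union of neighborhoods is preserved and $\Omega(P(C))=\Omega(\mathcal H_m)=F^n$. For (iv), a vector $\bar\gamma=(1,x,0^{n-m})$ has Hamming sum $(1,x)\ne 0^m$, hence $\bar\gamma\notin\mathcal H_m$, so $\bar\gamma\in P(C)$ forces $\bar\gamma\in R_\delta+\bar\delta$ for some $\delta\in\dot C$; applying (\ref{eq:hammingcode}) to $\bar\gamma-\bar\delta\in R_\delta\subseteq\mathcal H_m$ pins down $\delta=(1,x)$, so $\bar\gamma\in P(C)$ iff $(1,x)\in\dot C$ iff $x\in C$.

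The substantive part is (iii), and Proposition~\ref{p:disjoint} is the decisive input as well as the main obstacle. Given distinct $\bar x,\bar y\in P(C)$, I would verify $\Omega(\bar x)\cap\Omega(\bar y)=\emptyset$ by cases. If both lie in the unmodified part of $\mathcal H_m$, or both in the same new component $R_\delta+\bar\delta$ (then $\bar x-\bar y\in R_\delta\subseteq\mathcal H_m$ has weight at least $3$), the disjointness is inherited from $\mathcal H_m$ being a $1$-code. The crucial case is $\bar x\in R_\delta+\bar\delta$ and $\bar y\in R_\kappa+\bar\kappa$ for distinct $\delta,\kappa\in\dot C$: since $C$ is a $1$-code and both $\delta,\kappa$ start with $1$, we have $d(\delta,\kappa)\ge 3$, so Proposition~\ref{p:disjoint} makes the original components $R_\delta+\bar\delta-\bar e^{(\delta)}$ and $R_\kappa+\bar\kappa-\bar e^{(\kappa)}$ disjoint; being disjoint subsets of the $1$-code $\mathcal H_m$ their neighborhoods are disjoint too, and Lemma~\ref{l:compneighborhood} transports this to $\Omega(R_\delta+\bar\delta)\cap\Omega(R_\kappa+\bar\kappa)=\emptyset$, which contains $\Omega(\bar x)\cap\Omega(\bar y)$. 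The remaining mixed case---$\bar x$ in a new component, $\bar y$ in the unmodified part---uses the same transport trick: $\bar y\in\mathcal H_m$ differs from every point of $R_\delta+\bar\delta-\bar e^{(\delta)}$, so $\Omega(\bar y)$ misses $\Omega(R_\delta+\bar\delta-\bar e^{(\delta)})=\Omega(R_\delta+\bar\delta)\supseteq\Omega(\bar x)$.
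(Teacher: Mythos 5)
Your proof is correct and follows essentially the same route as the paper's: establish that each $R_\delta+\bar\delta-\bar e^{(\delta)}$ is a component of $\mathcal H_m$, use Proposition~\ref{p:disjoint} (via $d(\delta,\kappa)\ge 3$ for distinct $\delta,\kappa\in\dot C$) for mutual disjointness, and transport neighborhoods with Lemma~\ref{l:compneighborhood}; the recovery of $C$ is argued identically. You merely spell out the case analysis for the $1$-code property that the paper leaves implicit.
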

\begin{proof} 
It is clear that $\bar\delta-\bar e^{(\delta)}\in \mathcal H_m$ for all $\delta\in \indexes$, which means $R_{\delta}+\bar\delta-\bar e^{(\delta)}\subset \mathcal H_m$ for all $\delta$. According to Proposition~\ref{p:disjoint} the sets $R_{\delta}+\bar\delta-\bar e^{(\delta)}$ are mutually disjoint for all $\delta \in \dot C$. 
As they are subsets of a $1$-perfect code, their neighborhoods are also mutually disjoint. From Lemma \ref{l:compneighborhood} we see that $P(C)$ is a $1$-perfect code.

To prove (\ref{eq:origincode}), we first note that 
$\bar c=(\alpha,0^{n-m})\in \mathcal H_m$ implies $\alpha=0^m$, 
which follows from the definition of Hamming code. 
Finally, we need to show that if for some $x \in F^{m-1}$ we have $(1,x,0^{n-m})\in R_{\delta}+\bar \delta$, then $(1,x)=\delta$. Indeed, if $(1,x,0^{n-m})\in R_{\delta}+\bar \delta$ then $(1,x,0^{n-m})-\bar \delta \in R_\delta \subset \mathcal H_m$, which only holds for $(1,x)=\delta$. This completes the proof.
\end{proof}

\section{Partitions}\label{s:part}
\begin{theorem}\label{th:to-part}
 Let $(C_1,\ldots,C_k)$ 
 be a partition of $F^{s}$ 
 into $1$-codes. 
 Then there is a partition 
 $(P_1,\ldots,P_{q^{s+1}})$ of $F^n$ 
 into $1$-perfect codes 
 of length $n=(q^{s+1}-1)/(q-1)$
 such that for all $j=1,\ldots,k$,
\begin{equation}\label{eq:part}
C_j=\{x \in F^{s} \mid (1,\, x,\, 0^{n-s-1}) \in P_j\}.
\end{equation} 
\end{theorem}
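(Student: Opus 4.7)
The plan is to generalize the construction of Theorem \ref{th:code} so that all $q^{s+1}$ codes of the desired partition are produced in parallel. Set $m=s+1$. A natural starting observation is that the cosets of the linear Hamming code $\mathcal H_m$ already partition $F^n$ into $q^{m}$ $1$-perfect codes, and that a short word $(1,x,0^{n-m})$ has syndrome $(1,x)$, so distinct short words sit in distinct cosets. If $|C_j|=1$ for every $j$ this trivial partition would already suffice, but in general one must \emph{consolidate} the short words $\{\bar\delta:\delta\in\dot C_j\}$ into a single code $P_j$ while keeping the $P_j$'s pairwise disjoint and still exhausting $F^n$.

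For the consolidation I would apply, for each $j\le k$, a Theorem \ref{th:code}-style switching relative to some chosen base coset $\mathcal H_m+\bar t_j$: for every $\delta\in\dot C_j$ swap the $\delta$-component $R_\delta+\bar t_j+\bar\delta-\bar e^{(\delta)}$ of the base coset with the $\delta$-component $R_\delta+\bar t_j+\bar\delta$ of the neighbouring coset $\mathcal H_m+\bar t_j+\bar\delta$. Lemma \ref{l:compneighborhood} makes each single swap neighbourhood-preserving, and Proposition \ref{p:disjoint}, applied to the $1$-code $\dot C_j$ of elements of $\indexes$ starting with $1$, ensures that the components removed from the base for different $\delta\in\dot C_j$ are mutually disjoint, so the simultaneous swap is well defined. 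The modified base coset becomes $P_j$; each spoke coset $\mathcal H_m+\bar t_j+\bar\delta$ (which has lost one component and gained one) becomes another $1$-perfect code of the partition by a single application of Lemma \ref{l:compneighborhood}; and every coset untouched by any swap is itself one of the remaining $P_j$'s. Once this is laid out, perfect-codeness of each $P_j$ is immediate, and the recovery identity (\ref{eq:part}) follows from exactly the same short-word tracking as at the end of the proof of Theorem \ref{th:code}: $\bar\delta$ ends up in the added component $R_\delta+\bar t_j+\bar\delta$ (provided $\bar t_j$ has been chosen so that $\bar t_j\in R_\delta$ for all $\delta\in\dot C_j$), while no other short word is created or destroyed.

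The hard part, as I anticipate it, is to arrange the translates $\bar t_j$ so that the resulting collection of modified cosets genuinely tiles $F^n$: no two $P_j$'s overlap and every coset of $\mathcal H_m$ gets used in exactly one role. Proposition \ref{p:disjoint} cleanly controls swap interactions inside one block $\dot C_j$, but nothing in the partition hypothesis prevents elements $\delta\in\dot C_j$ and $\kappa\in\dot C_{j'}$ from lying at Hamming distance $1$ or $2$, so a swap performed for $\dot C_j$ could a priori target the same component or the same spoke coset as a swap performed for $\dot C_{j'}$. What is needed is a system of coset representatives of $\mathcal H_m$ such that the family of hubs $\{\mathcal H_m+\bar t_j\}_{j\le k}$ together with their spokes $\{\mathcal H_m+\bar t_j+\bar\delta:j\le k,\ \delta\in\dot C_j\}$ partitions the set of cosets of $\mathcal H_m$, and such that the constraints $\bar t_j\in R_\delta$ for $\delta\in\dot C_j$ (imposed by the short-word recovery) are simultaneously met. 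Producing such a system of representatives -- equivalently, globalising the single-block switching of Theorem \ref{th:code} to a consistent partition-wide switching -- is, I expect, the real combinatorial content of the proof, and the step where the $q$-ary case departs from the easier binary case and where the length $n=(q^{s+1}-1)/(q-1)$ is shown to be as tight as the abstract advertises.
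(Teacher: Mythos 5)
Your hub-and-spoke framework is the same as the paper's, but the proposal stops exactly where the proof has to start: you yourself flag the choice of a consistent system of coset representatives as ``the real combinatorial content'' and leave it open, so there is a genuine gap. Moreover, the one concrete constraint you do impose is self-defeating: requiring $\bar t_j\in R_\delta$ for all $\delta\in\dot C_j$ forces $\bar t_j\in\mathcal H_m$ (since $R_\delta\subset\mathcal H_m$), hence every hub coset $\mathcal H_m+\bar t_j$ collapses to $\mathcal H_m$ itself. For $k>1$ the resulting $P_j$ then cannot be disjoint, and Proposition~\ref{p:disjoint} no longer controls the removed components, because elements of different blocks $\dot C_j$ and $\dot C_{j'}$ may well be at Hamming distance $1$ or $2$ --- precisely the cross-block collision you identified as the danger.

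The paper's resolution is a small but essential change of parametrization that makes the condition $\bar t_j\in R_\delta$ unnecessary. Choose distinct $y_1,\ldots,y_k\in F^{s}$, set $\alpha_j=(0,y_j)$, and define $P_j=P(C_j-y_j)+\bar\alpha_j$; that is, apply Theorem~\ref{th:code} to the \emph{translated} code $C_j-y_j$ and then shift the result by $\bar\alpha_j$. The short word $(1,x,0^{n-m})$ with $x\in C_j$ equals $\bar\delta+\bar\alpha_j$ for $\delta=(1,x-y_j)$, so it lies in the added component $R_\delta+\bar\delta+\bar\alpha_j$ automatically, with no membership constraint on the translate. The hubs $H_{\alpha_j}$ are pairwise distinct and carry syndromes starting with $0$, while the spoke receiving the component added for $x\in C_j$ is $H_{(1,x)}$, whose syndrome starts with $1$ and depends on $x$ alone; since the $C_j$ partition $F^{s}$, each spoke is touched exactly once, spokes never coincide with hubs, and within one hub Proposition~\ref{p:disjoint} applies exactly as you intended. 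Each spoke loses one component and gains the matching one from its hub, yielding a $1$-perfect code $O_x$ by Lemma~\ref{l:compneighborhood}, and these together with the untouched cosets tile $F^n$. Without this choice of representatives (or an equivalent one), your argument does not go through.
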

\begin{proof}
 Put $m=s+1$. Let for all $\alpha$ from $F^m$, 
 $H_{\alpha}$ be the coset of the Hamming code that contains $\bar\alpha$;
 so, $\{H_{\alpha}\}_{\alpha \in F^m}$ is a partition of $F^n$.
 Let us choose $k$ distinct vectors $y_1$, \ldots, $y_k$ from $F^{s}$, 
 and denote $\alpha_j=(0, y_j)$, $j=1,\ldots,k$.
Using Theorem~\ref{th:code}, 
we replace the code $H_{\alpha_j}$ by $P(C_j-y_j)+\bar\alpha_j$.
Then, this code will be the $j$s element $P_j$ of the constructed partition; 
readily, (\ref{eq:part}) is straightforward from (\ref{eq:origincode}).
It remains to replace the other cosets of the Hamming code to get a partition. 
According to the definition of $P_j$, 
it intersects with the following cosets of the Hamming code:
with $H_{\alpha_j}$ and, for every $x$ from $C_j$, 
with $H_{(1,x)}$, which has a common component with $P_j$.
Let $O_x$ be obtained from $H_{(1,x)}$ by removing this component, and by including the corresponding component
(the one that is not in $P_j$) of $H_{\alpha_j}$.
Now we see that the $|C_j|+1$ codes $H_{\alpha_j}$ and $O_x$, $x\in C_j$, 
are mutually disjoint and 
$$
P_j \cup \bigcup_{x\in C_j} O_x = H_{\alpha_j}\cup \bigcup_{x\in C_j} H_{(1,x)}.
$$
Then, the codes $P_j$, $j=1,\ldots,k$, together with the codes $O_x$, $x\in F^{m-1}$, and the codes
$H_{\alpha}$, where $\alpha$ does not start with $1$ and is different from all  $\alpha_j$, $j=1,\ldots,k$,
form a partition of $F^n$. As was noted above, (\ref{eq:part}) holds.
 \end{proof}

Note that, since the number $k$ of codes 
in the original partition can be rather large, 
up to $q^{s}$, the length $n$ for which 
it is possible to construct the embedding cannot be small too:
the number $(q-1)n+1$ of perfect codes 
in the resulting partition cannot be smaller than $q^s$.
So, $n \ge \frac{q^s-1}{q-1}$, 
and we see that our construction gives an embedding with ``almost''
minimal length $\frac{q^{s+1}-1}{q-1}$. 
Using the same approach and based on the results of 
\cite{AvgKro:embed} and \cite{Rom:2012}, 
one can construct an embedding of minimal length for the cases
$q=2$ and $q=3$, respectively.

Finally, we note that Theorem~\ref{th:to-part} is the most general known formulation
that generalize Theorem~\ref{th:code} and, in particular, the result of \cite{AvgKro:embed}
(putting aside small increasing of the embedding length) and some results of \cite{Rom:2012}.
As noted in \cite{AvgKro:embed}, 
the classical results \cite{Treash:TStoSTS} and \cite{Ganter:QStoSQS}
about embedding in Steiner triple systems and Steiner quadruple systems respectively
can also be treated as partial cases of this theorem.

\section{Acknowledgements}
This research was financed by the Russian Science Foundation (grant No 14-11-00555).


\providecommand\href[2]{#2} \providecommand\url[1]{\href{#1}{#1}}
  \providecommand\bblmay{May} \providecommand\bbloct{October}
  \providecommand\bblsep{September} \def\DOI#1{{\small {DOI}:
  \href{http://dx.doi.org/#1}{#1}}}\def\DOIURL#1#2{{\small{DOI}:
  \href{http://dx.doi.org/#2}{#1}}}\providecommand\bbljun{June}

\end{document}